\newtheorem{theorem}{Theorem}
\newtheorem{lemma}[theorem]{Lemma}
\newtheorem{corollary}[theorem]{Corollary}
\theoremstyle{definition}
\renewcommand{\P}{\mathcal P}
\newcommand{\B}{\mathcal B}
\newcommand{\ZZ}{\mathbb Z}
\newcommand{\DD}{\mathbb D}
\renewcommand{\SS}{\mathbb S}
\newcommand{\SD}{\DD_2}
\newcommand{\GL}{\mathrm{GL}}
\newcommand{\GenDih}{\mathrm{GenDih}}
\newcommand{\Aut}{\mathrm{Aut}}
\newcommand{\Sym}{\mathrm{Sym}}
\newcommand{\Cos}{\mathrm{Cos}}
\newcommand{\val}{\mathrm{val}}
\newcommand{\core}{\mathrm{core}}
\newcommand{\Pet}{\mathrm{Pet}}
\newcommand{\la}{\langle}
\newcommand{\ra}{\rangle}
\begin{document}
\title[Locally arc-transitive graphs of valence $\{3,4\}$]{Locally arc-transitive graphs of valence $\{3,4\}$ with trivial edge kernel}

\author[P. Poto\v{c}nik]{Primo\v{z} Poto\v{c}nik}
\address{Primo\v{z} Poto\v{c}nik,\newline
 Faculty of Mathematics and Physics,
 University of Ljubljana \newline 
Jadranska 19, 1000 Ljubljana, Slovenia}\email{primoz.potocnik@fmf.uni-lj.si}

\subjclass[2000]{20B25}
\keywords{edge-transitive; locally arc-transitive; graph; symmetry; amalgam} 

\begin{abstract}
In this paper we consider connected locally  $G$-arc-transitive graphs with vertices of valence $3$ and $4$, such that 
the kernel $G_{uv}^{[1]}$ of the action of an edge-stabiliser on the neighourhood $\Gamma(u) \cup \Gamma(v)$
is trivial. We find nineteen finitely presented groups with the property that any
such group $G$ is a quotient of one of these groups.
As an application, we enumerate all connected locally arc-transitive graphs of valence $\{3,4\}$ on at most $350$ vertices
whose automorphism group contains a locally arc-transitive subgroup $G$ with
$G_{uv}^{[1]} = 1$.
\end{abstract}

\maketitle

\section{Introduction}
\label{sec:intro}
  An arc in a simple graph $\Gamma$ is an ordered pair of adjacent vertices of $\Gamma$.
  Let $\Gamma$ be graph and $G$ a group of automorphisms of $\Gamma$.
Then $\Gamma$ is said to be {\em $G$-arc-transitive} provided that $G$ acts transitively on the set of arcs of $\Gamma$.
Similarly, $\Gamma$ is said to be {\em locally $G$-arc-transitive} if for every vertex $v$
the stabiliser $G_v$ of $v$ acts transitively on the set of  all arcs of $\Gamma$ with the initial vertex being $v$.
A graph $\Gamma$ is arc-transitive if it is $\Aut(\Gamma)$-arc-transitive.  In this paper, we shall
be particularly interested in the structure of the vertex-stabilisers (and thus of the group $G$ itself) in certain locally $G$-arc-transitive graphs.
All the graphs in this paper are assumed to be connected.

If $\Gamma$ is a connected locally $G$-arc-transitive graph, then it is  well known that $G$ is transtiivie on the edges of $\Gamma$ 
and that it has at most two orbits on the vertex set $V(\Gamma)$. If $G$ is transitive on $V(\Gamma)$, then it is in fact arc-transitive.
 On the other hand, if $G$ has two orbits on $V(\Gamma)$, then we say that $\Gamma$ is {\em genuinely} locally $G$-arc-transitive. 
 In this case it can be shown that  $\Gamma$ is bipartite and that the two orbits form the bipartition of $\Gamma$. 
 Furthermore, the group $G$ is generated by a pair of stabilisers $G_u$ and $G_v$ of two adjacent vertices $u,v \in V(\Gamma)$.

If $G$ is an automorphism group of a graph $\Gamma$ and $v\in V(\Gamma)$, then we let $G_v^{\Gamma(v)}$ denote the permutation
group induced by the action of $G_v$ on the neighbourhood $\Gamma(v)$ of the vertex $v$, and let $G_v^{[1]}$ denote the kernel of this action
(that is, $G_v^{[1]}$ is the group of all those elements of $G$ that fix $v$ and each of its neighbours). 
Similarly, for an edge $uv$ of $\Gamma$ let $\Gamma(uv) = \Gamma(u) \cup \Gamma(v) \setminus \{u,v\}$ and let $G_{uv}^{\Gamma(uv)}$ denote
the permutation group induced by the action of $G_{uv}$ on $\Gamma(uv)$. Observe that the kernel of this action is the intersection $G_{u}^{[1]} \cap G_{v}^{[1]}$,
which will be denoted by $G_{uv}^{[1]}$ and called the {\em edge kernel} of the group $G$.
Note that $G_v^{\Gamma(v)} \cong G_v/G_v^{[1]}$ and $G_{uv}^{\Gamma(uv)} \cong G_{uv}/G_{uv}^{[1]}$.

Since in a locally $G$-arc-transitive graph $\Gamma$ the group $G$ has at most two orbits on $V(\Gamma)$ it follows that
the valence function can take only two values, say $d_1$ and $d_2$; the graph is then said to be 
{\em biregular} of valence $\{d_1,d_2\}$.

If one of the two valences, say $d_2$, is $2$, then it is easy to see that the graph can be obtained from
a $G$-arc-transitive graph of valence $d_1$ by subdividing each edge of that graph (where that graph
is allowed to have parallel edges in the case when the original graph contains cycles of length $4$).
In this sense, the case of genuinely locally
$G$-arc-transitive graphs of valence $\{2,d\}$ is equivalent to the family of $G$-arc-transitive graphs of valence $d$
and has, as such, received much attention in the past; in particular, the structure of the vertex-stabiliser $G_v$
has been determined for the cases $d=3$ (see  \cite{CL,DjM,tutte}) and $d=4$ and $5$ (see \cite{dj,pot,weiss}).

The case where $d_1=d_2=3$ was studied in several papers, most notably by Goldschmidt in \cite{Gold}, where it was proved that the group $G$
must be a quotient of one of the $15$ universal groups, and in \cite{CMMP}, where a complete list of
all such graphs on up to $768$ vertices was compiled. 

The purpose of this paper is to begin an investigation of the next interesting case where $\{d_1,d_2\} =\{ 3,4\}$.
In Section~\ref{sec:large}, we show that in this case the edge kernel $G_{uv}^{[1]}$ can be arbitrary large. This is in sharp contrast with
the behaviour of locally $G$-arc-transitive graphs of valence $\{3,3\}$, where the order of $G_{uv}^{[1]}$ divides $32$ (see \cite{CMMP,Gold}).

In the study of locally $G$-arc-transitive graphs, the case where the edge kernel $G_{uv}^{[1]}$ is not trivial is rather special 
(see for example \cite{vanBon}). In this paper, we shall restrict ourselves 
to the locally $G$-arc-transitive graphs of valence $\{3,4\}$ with
the trivial edge kernel. We will prove that, in this case, the group $G$ is a quotient of one of nineteen {\em universal} 
infinite finitely presented groups (that we give explicitly in terms of generators and relators).

\begin{theorem}
\label{the:main}
Let $\Gamma$ be a connected locally $G$-arc-transitive graph of valence $\{3,4\}$ and let $uv$ be an edge of $\Gamma$ with
$\val(v) =3$ and  $\val(u) = 4$.
If $G_{uv}^{[1]} = 1$,
then for some $i\in\{0,1,\ldots, 18\}$ there exists an epimorphism from the group $U_i$, given in Table~1, onto $G$, 
which maps the subgroups $L_i$, $B_i$ and $R_i$ isomorphically onto $G_v$, $G_{uv}$ and $G_u$, respectively.
\end{theorem}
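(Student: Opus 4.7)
The plan is to phrase the theorem as a classification of finite amalgams $G_u \leftarrow G_{uv} \to G_v$ and then identify the $U_i$'s as the corresponding universal (free amalgamated) completions. Since $\Gamma$ has two distinct valences, $G$ has exactly two orbits on $V(\Gamma)$, namely $G\cdot u$ and $G\cdot v$; a short connectedness argument, using that $G_u$ and $G_v$ are transitive on $\Gamma(u)$ and $\Gamma(v)$ respectively, shows that the $\langle G_u, G_v\rangle$-orbits of $u$ and $v$ cover $V(\Gamma)$, whence $G = \langle G_u, G_v\rangle$ and $G_u \cap G_v = G_{uv}$. By the Bass--Serre (coset graph) construction, $G$ is then a quotient of the free amalgamated product $G_u *_{G_{uv}} G_v$ under an epimorphism that restricts to the identity on each of $G_u$, $G_{uv}$, $G_v$. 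Thus it suffices to enumerate, up to isomorphism of amalgams, the possible triples $(G_u, G_{uv}, G_v)$ that can arise, and then set $U_i$ equal to the free amalgamated product of the $i$-th triple with $R_i$, $B_i$, $L_i$ the images of $G_u$, $G_{uv}$, $G_v$.

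The hypothesis $G_{uv}^{[1]} = 1$ makes this enumeration effective. Since $G_{uv}$ acts faithfully on $\Gamma(uv)$, a set of $3+2=5$ points partitioned as $\Gamma(u)\setminus\{v\}$ and $\Gamma(v)\setminus\{u\}$, the edge stabiliser embeds into $\Sym(3)\times\Sym(2)$, giving $|G_{uv}|\mid 12$, $|G_v|\mid 36$ and $|G_u|\mid 48$. The local group $G_v^{\Gamma(v)}$ is a transitive subgroup of $\Sym(3)$ (so $\ZZ_3$ or $\Sym(3)$) and $G_u^{\Gamma(u)}$ is a transitive subgroup of $\Sym(4)$ (five possibilities), splitting the problem into a small number of local-action cases.

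For each such local pair I would then enumerate the admissible edge stabilisers $G_{uv}$ and, layered on top, the admissible vertex kernels $G_u^{[1]}$ and $G_v^{[1]}$: both are normal in $G_{uv}$ with trivial intersection (by hypothesis), hence centralise each other, and their projections to $\Sym(\Gamma(u)\setminus\{v\})$ and $\Sym(\Gamma(v)\setminus\{u\})$ must recover the relevant point stabilisers inside the chosen local actions. The extension data of $G_u$ over $G_u^{\Gamma(u)}$ by $G_u^{[1]}$ (and likewise of $G_v$), together with the identification of $G_{uv}$ as a subgroup of both $G_u$ and $G_v$, then pins down the amalgam. Writing these out and pruning isomorphic duplicates should yield exactly the list tabulated as $(L_i, B_i, R_i)$, and setting $U_i := L_i *_{B_i} R_i$ gives the universal groups of the theorem.

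The main obstacle will be the enumeration itself, particularly the elimination of duplicates: two amalgams should be identified whenever there is an automorphism of $G_{uv}$ extending to automorphisms of $G_u$ and $G_v$ that intertwines the embeddings, and this isomorphism check is the most error-prone step. A further delicate point is to verify, for each surviving candidate, that the embeddings $L_i, B_i, R_i \hookrightarrow U_i$ are indeed injective and that $U_i$ acts faithfully on the associated coset graph --- so that the required epimorphism from $U_i$ onto $G$ genuinely exists and preserves the local structure. Both tasks are finite but delicate; I would expect a computer algebra verification in {\sc Magma} or {\sc Gap} to accompany the hand analysis and to confirm that exactly nineteen amalgams survive.
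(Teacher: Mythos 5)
Your framework is exactly the one the paper uses: the faithful action of $G_{uv}$ on $\Gamma(uv)$ forces $G_{uv}\le \Sym(3)\times\Sym(2)$, the connectedness argument gives $G=\la G_u,G_v\ra$, and each $U_i$ is precisely the amalgamated free product $L_i *_{B_i} R_i$ written as a union of presentations, so the epimorphism and the isomorphic embeddings of $L_i,B_i,R_i$ come for free from the universal property. Your structural observations (that $G_u^{[1]}$ and $G_v^{[1]}$ are normal in $G_{uv}$ with trivial intersection, hence commute, and inject into $\Sym(\Gamma(v)\setminus\{u\})\cong C_2$ and $\Sym(\Gamma(u)\setminus\{v\})\cong S_3$ respectively) are also the ones the paper exploits.

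The gap is that the proposal stops exactly where the proof begins: the entire mathematical content of the theorem is the case-by-case determination of the possible pairs $(G_v,G_u)$ with their common subgroup $G_{uv}$, and you only describe the shape of that enumeration without executing any of it. The paper carries it out by hand, organised primarily by the kernels ($K_u\in\{1,C_2\}$ and $K_v\in\{1,C_3,S_3\}$) rather than by the local actions, and the individual cases require genuine arguments --- e.g.\ ruling out $G_v\cong D_9$ in Case B.2 because $D_9$ has no $C_6$ subgroup, ruling out abelian $G_v$ because $K_u$ would then be normal in $G$ contradicting core-freeness of $G_{uv}$, showing $S_3$ splits off as a direct factor of $G_v$ when $K_v\cong S_3$, and the delicate choice of generators of the quaternion group compatible with the $\la a,b\ra$-action in Cases B.2.2 and B.3.2. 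None of this is routine bookkeeping that can be waved at; without it you have not established that the list has nineteen entries, nor that these are the right nineteen. (One small reassurance: the injectivity of $L_i,B_i,R_i\hookrightarrow U_i$ that you flag as delicate is automatic once $U_i$ is recognised as the amalgamated product, by the standard normal form theorem; and faithfulness of $U_i$ on the coset graph is not needed for the statement.)
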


\begin{center}
\begin{small} 
 Table 1: Universal groups for locally arc-transitive graphs of valence $\{3,4\}$ with $G_{uv}^{[1]}=1$.
 \end{small}
\\[2mm]
\begin{scriptsize}
\begin{tabular}{|c|}
\hline\hline
\begin{tabular}{l} 
 \\
           $U_0 =  \la a, c \mid a^3, c^4 \ra$ \\ \\ 
           $L_0 = \la a \ra \cong C_3, \quad B_0=1, \quad R_0=\la c \ra \cong C_4$ \\ \\
         \end{tabular} 
 \\
\hline
\begin{tabular}{l} 
 \\
           $U_1 =  \la a, x,y \mid a^3, x^2,y^2, [x,y] \ra$ \\ \\ 
           $L_1 = \la a \ra \cong C_3, \quad B_1=1, \quad R_1=\la x,y \ra \cong C_2\times C_2$ \\ \\
         \end{tabular} 
 \\
 \hline

 \begin{tabular}{l} 
 \\
           $U_2 = \la a,b,c \mid a^3, b^2, c^4,
                                                        [a,b],  (bc)^2 \ra$ \\ \\
           $L_2=\la a,b\ra   \cong C_6, \quad B_2=\la b \ra \cong C_2,\quad R_2=\la b,c\ra \cong D_4 $ \\ \\
         \end{tabular} 
 \\
 \hline

\begin{tabular}{l} 
 \\
           $U_3 = \la a,b,c \mid a^3, b^2, c^4,
                                                        (ab)^2,  (bc)^2 \ra$ \\ \\
           $L_3=\la a,b\ra  \cong S_3, \quad B_3=\la b \ra \cong C_2,\quad R_3=\la b,c\ra \cong D_4 $ \\ \\
         \end{tabular} 
 \\
\hline

\begin{tabular}{l} 
 \\
           $U_4 = \la a,c,x,y \mid a^9,c^3, x^2, y^2,
                                                        a^3=c, [x,y], x^c=y, y^c=xy \ra$ \\ \\
           $L_4=\la a ,c\ra \cong C_9, \quad B_4=\la c \ra \cong C_3,\quad R_4=\la c,x,y\ra \cong A_4 $ \\ \\
         \end{tabular} 
 \\
 \hline

\end{tabular}
\end{scriptsize}
\end{center}   

\newpage

\begin{center}
\begin{scriptsize}
\begin{tabular}{|c|}
\hline\hline

 \begin{tabular}{l} 
 \\
           $U_5 = \la a,c,x,y \mid a^3,c^3, x^2, y^2,
                                                        [a,c], [x,y], x^c=y, y^c=xy \ra$ \\ \\
           $L_5=\la a ,c\ra   \cong C_3 \times C_3, \quad B_5=\la c \ra \cong C_3,\quad R_5=\la c,x,y\ra \cong A_4 $ \\ \\
         \end{tabular} 
 \\
\hline

\begin{tabular}{l} 
 \\
           $U_6 = \la a,b,c,x,y \mid a^9,  b^2,c^3, x^2, y^2,
                                                        a^3=c, (ab)^2,  [x,y], (bc)^2, x^c=y, y^c=xy, x^b = y \ra$ \\ \\
           $L_6=\la a,b, c\ra  \cong D_9, \quad B_6=\la b,c \ra \cong S_3,\quad R_6=\la b,c,x,y\ra \cong S_4 $ \\ \\
         \end{tabular} 
 \\
 \hline

\begin{tabular}{l} 
 \\
           $U_7 = \la a,b,c,x,y \mid a^3,  b^2,c^3, x^2, y^2,
                                                        [x,y], (bc)^2, x^c=y, y^c=xy, x^b = y, [a,c], (ab)^2 \ra$ \\ \\
           $L_7=\la a, b, c\ra  \cong \GenDih(C_3\times C_3), \quad B_7=\la b,c \ra \cong S_3,\quad R_7=\la b,c,x,y\ra \cong S_4 $ \\ \\
         \end{tabular} 
 \\
 \hline

\begin{tabular}{l} 
 \\
           $U_8 = \la a,b,c,x,y \mid a^3,  b^2,c^3, x^2, y^2,
                                                        [x,y], (bc)^2, x^c=y, y^c=xy, x^b = y, [a,c], [a,b] \ra$ \\ \\
           $L_8=\la a, b, c\ra = \la c \ra \rtimes \la a,b\ra \cong C_3\rtimes C_6, \quad B_8=\la b,c \ra \cong S_3,\quad R_8=\la b,c,x,y\ra \cong S_4 $ \\ \\
         \end{tabular} 
 \\
 \hline

\begin{tabular}{l} 
 \\
           $U_9 = \la c, d,x,y \mid c^3,d^2,x^2,y^2, (cd)^2, [d,x], [d,y], [x,y] \ra $ \\ \\
           $L_9=\la c,d\ra  \cong S_3, \quad B_9=\la d \ra \cong C_2,\quad R_9=\la d,x,y\ra \cong C_2\times C_2\times C_2 $ \\ \\
         \end{tabular} 
 \\
 \hline

\begin{tabular}{l} 
 \\
           $U_{10} =\la c,d,x \mid c^3,d^2,x^4,  (cd)^2, [x,d] \ra $ \\ \\
           $L_{10}=\la c,d\ra  \cong S_3, \quad B_{10}=\la d \ra \cong C_2,\quad R_{10}=\la d,x\ra \cong C_2\times C_4$ \\ \\
         \end{tabular} 
 \\
 \hline

\begin{tabular}{l} 
 \\
           $U_{11} =\la c,d,x,y \mid c^3,d^2,x^4,y^2, (cd)^2, x^2=d, [x,y] \ra $ \\ \\
           $L_{11}=\la c,d\ra  \cong S_3, \quad B_{11}=\la d \ra \cong C_2,\quad R_{11}=\la d,x,y\ra \cong C_2\times C_4$ \\ \\
         \end{tabular} 
 \\
 \hline

\begin{tabular}{l} 
 \\
           $U_{12} =\la c,d,x \mid c^3, d^2,x^8, (cd)^2, x^4=d \ra $ \\ \\
           $L_{12}=\la c,d\ra  \cong S_3, \quad B_{12}=\la d \ra \cong C_2,\quad R_{12}=\la d,x\ra \cong C_8$ \\ \\
         \end{tabular} 
 \\
 \hline

\begin{tabular}{l} 
 \\
           $U_{13} =\la c,d,x,y \mid c^3, d^2,x^4,y^2, (cd)^2, x^2=d, (xy)^2 \ra $ \\ \\
           $L_{13}=\la c,d\ra  \cong S_3, \quad B_{13}=\la d \ra \cong C_2,\quad R_{13}=\la d,x,y\ra \cong D_4$ \\ \\
         \end{tabular} 
 \\
 \hline

\begin{tabular}{l} 
 \\
           $U_{14} = \la c,d,x,y \mid c^3, d^2,x^4,y^4, (cd)^2, x^2=y^2=[x,y]=d \ra $ \\ \\
           $L_{14}=\la c,d\ra  \cong S_3, \quad B_{14}=\la d \ra \cong C_2,\quad R_{14}=\la d,x,y\ra \cong Q$ \\ \\
         \end{tabular} 
 \\
\hline

\begin{tabular}{l} 
\\
         $U_{15} = \la a,c,d,x,y \mid a^3, c^3, d^2,x^2, y^2,
                                                         (dc)^2, [a,c], [a,d],
                                                       [d,x],[d,y], [x,y],  x^a=y, y^a=xy \ra$
                                                       \\ \\                                                      
           $L_{15}=\la a,c,d\ra \cong C_3 \times S_3, \quad B_{15}=\la a ,d \ra \cong C_6,\quad R_{15}=\la a,d,x,y\ra \cong C_2 \times A_4 $ \\ \\
  \end{tabular} 
  \\
\hline

\begin{tabular}{l} 
\\
         $U_{16} = \la a,c,d,x,y \mid a^3, c^3, d^2,x^4, y^4,
                                                         (dc)^2, [a,c], [a,d],
                                                       x^2=y^2=[x,y] = d,  x^a=y, y^a=xy \ra$
                                                       \\ \\                                                      
           $L_{16}=\la a,c,d\ra \cong C_3 \times S_3, \quad B_{16}=\la a ,d \ra \cong C_6,\quad R_{16}=\la a,d,x,y\ra \cong Q \rtimes C_3 $ \\ \\
  \end{tabular} 
  \\
\hline

\begin{tabular}{l} 
\\
  \begin{tabular}{ll}
         $U_{17} = \la a,b,c,d,x,y \mid$ & \hspace{-3mm} $a^3, b^2, c^3, d^2,x^2, y^2,
                                                        (ba)^2, (dc)^2, [a,c], [a,d], [b,c],[b,d],$\\
                                                      & $  [x,d],[y,d], [x,y],  x^a=y, y^a=xy, x^b=x, y^b=xy \ra$
  \end{tabular}                                                    
                                                       \\ \\
                                                      
           $L_{17}=\la a,b,c,d\ra =  S_3 \times S_3, \quad B_{17}=\la a, b,d \ra \cong C_2 \times S_3,\quad R_{17}=\la a,b,d,x,y\ra \cong C_2 \times S_4 $ \\ \\
  \end{tabular} 
  \\
\hline

\begin{tabular}{l} 
\\
  \begin{tabular}{ll}
         $U_{18} = \la a,b,c,d,x,y \mid$ & \hspace{-3mm} $a^3, b^2, c^3, d^2,x^4, y^4,
                                                        (ba)^2, (dc)^2, [a,c], [a,d], [b,c],[b,d],$\\
                                                      & $  x^2=y^2=[x,y]=d,  x^a=y, y^a=xy, x^b=x^{-1}, y^b=yx \ra$
  \end{tabular}     
 \\ \\
           $L_{18}=\la a,b,c,d\ra =  S_3 \times S_3, \quad B_{18}=\la a, b,d \ra \cong C_2 \times S_3,\quad R_{18}=\la a,b,d,x,y\ra \cong Q \rtimes S_3 $ \\ \\
  \end{tabular} 
  \\
\hline

\hline

\end{tabular}
\end{scriptsize}
\end{center}

As an application of Theorem~\ref{the:main}, we compute a complete list of all connected biregular graphs of valence $\{3,4\}$ on
 at most $350$ vertices (and thus at most $600$ edges)
admitting a locally arc-transitive group of automorphisms $G$ with $G_{uv}^{[1]} = 1$.
This was done by applying the  {\tt LowIndexNormalSubgroups} routine, implemented in {\tt Magma} \cite{magma},
to find, for each of the groups
$U_i$, all the quotients $G$ of $U_i$ of order at most $600 |B_i|$ such that the group $L_i$ and $R_i$ project isomorphically onto
some subgroups $L$ and $R$ of $G$. Once such triples $(G,L,R)$ were obtained, we constructed for each of them 
the so called {\em coset graph} $\Cos(G,L,R)$, the vertex set of which is the disjoint union $G/L \cup G/R$ of coset sets
 with edges of the form $\{Lg,Rg\}$ for $g\in G$ (see, for example, \cite{GLP} for details).
 In this way, we found $220$ pairwise non-isomorphic graphs. A complete list of graphs (in magma code) 
can be accessed at \cite{web}.
In Section~\ref{sec:data}, we present some graph-theoretical parameters of the the $42$ graphs from that list with at most $100$ vertices.

Let us also mention that a triple of finite groups $(L,B,R)$ with $B=L\cap R$ (such as a triple $(L_i,B_i,R_i)$ from Table~1) is often called 
a {\em finite group amalgam} of {\em index} $([L:B],[R:B])$ (that is, of index $(3,4)$ in the case of the amalgams $(L_i,B_i,R_i)$ from Table~1).
Note that the edge-stabiliser $G_{uv}$ in a connected locally $G$-arc-transitive graph is core-free in $G$ (that is, contains no nontrivial subgroups 
that are normal in $G$). This implies that the amalgams $(L,B,R)$ (where $L=G_v$, $B=G_{uv}$ and $R=G_u$)
arising from such pairs $(\Gamma,G)$ have the following property: if $N$ is a subgroup of $G_{uv}$ which is normal both in $G_v$ and $G_u$, then $N$ is trivial. Such amalgams are often called {\em faithful}. Furthermore, the requirement that the edge kernel $G_{uv}^{[1]}$ is trivial translates into the requirement that the intersection $\core_L(B) \cap \core_R(B)$  of the cores of $B$ in $L$ and $R$ is trivial. We say that such amalgams
have a {\em trivial edge kernel}. With this terminology in mind,
Theorem~\ref{the:main} can thus also be viewed as a classification of faithful finite group amalgams of index $\{3,4\}$ with trivial edge kernel.
(We refer the reader to \cite{amalgams} for further details on the relationship between amalgams and locally arc-transitive graphs.)

\begin{corollary}
If $(L,B,R)$ is a finite faithful amalgam of index $\{3,4\}$ with trivial edge kernel, then it is isomorphic to one of the amalgams $(L_i,B_i,R_i)$
given in Table~1.
\end{corollary}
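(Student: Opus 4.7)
The plan is to reduce the corollary to Theorem~\ref{the:main} by producing, for any given finite faithful amalgam $(L,B,R)$ of index $\{3,4\}$ with trivial edge kernel, a connected locally arc-transitive graph whose vertex, edge and edge-kernel stabilisers reproduce the data of the amalgam. Without loss of generality assume $[L:B] = 3$ and $[R:B] = 4$.

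First I would form the amalgamated free product $G = L \ast_B R$ and consider its action on the associated Bass--Serre tree $T$, whose vertex set is the disjoint union $G/L \sqcup G/R$ and whose edges are the pairs $\{gL, gR\}$ for $g \in G$. Routine verification shows that $T$ is a connected graph (because $L$ and $R$ together generate $G$), biregular of valence $\{3,4\}$, that the stabiliser of $v := L$ is exactly $L$, the stabiliser of $u := R$ is $R$, and the edge-stabiliser $G_{uv} = L \cap R$ equals $B$. Since $L$ is transitive on $L/B$ (and $R$ on $R/B$), the action of $L$ on the neighbourhood $\{gR : g \in L\}$ of $v$ is transitive, so $T$ is locally $G$-arc-transitive.

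Next I would check triviality of the edge kernel. An element of $G_{uv}^{[1]}$ fixes both $u$ and $v$, hence lies in $B$, and fixes every neighbour of both, hence lies in the kernel of the $L$-action on $L/B$ and in the kernel of the $R$-action on $R/B$; that is, $G_{uv}^{[1]} = \core_L(B) \cap \core_R(B)$, which is trivial by hypothesis.

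Finally, applying Theorem~\ref{the:main} to the pair $(T, G)$ with the edge $uv$ yields an index $i \in \{0,\ldots,18\}$ and an epimorphism $\varphi \colon U_i \twoheadrightarrow G$ whose restrictions $L_i \to L$, $B_i \to B$, $R_i \to R$ are isomorphisms; these restrictions automatically agree on $B_i$, so together they constitute an isomorphism of amalgams $(L_i, B_i, R_i) \cong (L, B, R)$. The only technical point beyond the theorem itself is the standard Bass--Serre identification of vertex and edge stabilisers in $T$ with $L$, $R$, and $B$; this is routine and I do not expect it to be an obstacle.
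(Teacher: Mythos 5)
Your argument is correct and is precisely the (implicit) route the paper takes: the corollary is presented as an immediate consequence of Theorem~\ref{the:main} via the standard correspondence between amalgams and locally arc-transitive graphs, which you make explicit by realising $(L,B,R)$ on the Bass--Serre tree of $L \ast_B R$ and reading off $G_v=L$, $G_u=R$, $G_{uv}=B$ and $G_{uv}^{[1]}=\core_L(B)\cap\core_R(B)=1$. The only point worth spelling out is that the $G$-action on $T$ is faithful (so that $G$ really is a group of automorphisms of $T$ and Theorem~\ref{the:main} applies): the kernel of this action is the core of $B$ in $G$, which is contained in $\core_L(B)\cap\core_R(B)=1$, so the trivial-edge-kernel hypothesis already guarantees faithfulness.
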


We prove Theorem~\ref{the:main} in Section~\ref{sec:proof}. In Section~\ref{sec:large}, we present a construction (and characterisation)
of locally arc-transitive graphs with arbitrary large kernel. Finally, in Section~\ref{sec:data}, a list of all connected graphs of valence $\{3,4\}$
of order up to $100$ that admit a locally arc-transitive action of a group with a trivial edge kernel, is given and several graph theoretical parameters
of these graphs are computed.

\section{Proof of Theorem~\ref{the:main}}
\label{sec:proof}

 Our approach to the proof of Theorem~\ref{the:main} can be summarised as follows: Since $G_{uv}^{[1]} = 1$,
it follows that $G_{uv}$ acts faithfully on $\Gamma(uv)$ and is thus a subgruop of $S_3\times C_2$.
 This gives only finitely possibilities for $G_{uv}$ as an abstract group.
Furthermore, since $G_{uv}$ is embedded in $G_u$ and $G_v$ as a subgroups of index $4$ and $3$, 
respectively, there is only finitely many  possible embeddings of
$G_{uv}$ into $G_u$ and $G_v$. For each possible pair of embeddings, we shall write the groups 
$G_u$ and $G_v$ as finitely presented groups with the common generators
generating the group $G_{uv}$. Finally, since the group $G$ is generated by $G_u$ and $G_v$, 
this will allow us to conclude that $G$ is a quotient of the group
generated by the union of the generators of $G_u$ and $G_v$ subject to the union of relators of $G_u$ and $G_v$, respectively.
We refer the reader to \cite{pot}  for the details of this procedure.
\medskip

To simplify notation, let $K_v = G_v^{[1]} $ and $K_u=G_u^{[1]}$.
Since $G_{uv}^{[1]}=1$, it follows that  the mapping  which maps an element $g\in G_{uv}$ to
the permutation $\bar{g}$ induced by the action of $g$ on $\Gamma(uv)$ is an isomorphism of groups $G_{uv}$ and $G_{uv}^{\Gamma(uv)}$.
Being a kernel of group epimorphisms, the groups $K_u$ and $K_v$ are isomorphic to some normal subgroups of $\Sym(\Gamma(u)\setminus\{v\}) \cong C_2$
and $\Sym(\Gamma(v)\setminus\{u\}) \cong S_3$, respectively.
This shows that  $K_u$ is either trivial or isomorphic to $C_2$, while
$K_v$ is either trivial, isomorphic to $C_3$, or to $S_3$.
We will split the proof into two cases, depending on whether $K_u$ is trivial or not.
\medskip

{\bf Case A}
 Suppose that $K_u=1$. Then, of course, $G_u$ acts faithfully and transitively on $\Gamma(u)$ and $G_u \cong G_u^{\Gamma(u)} \le S_4$.
 The group $G_v$ is therefore isomorphic to one of the groups $C_4$, $C_2\times C_2$,  $D_4$, $A_4$ or $S_4$.
\medskip

{\bf Case A.1}
Let us first deal with the case where $G_u$ is isomorphic to $C_4$ or $C_2\times C_2$.
Then $G_{uv} = 1$ and thus $K_v = 1$, implying that $G_v \cong G_v^{\Gamma(v)}$ is a regular subgroup of $S_3$; in particular, $G_v \cong C_3$.
Since $G$ is generated by $G_u$ and $G_v$, this implies that $G$ is a quotient of the free product $U_0\cong C_3*C_4$ or of the free product $U_1\cong C_3*(C_2\times C_2)$, with $G_v \cong L_i$, $G_u\cong R_i$ and $G_{uv} \cong B_i$ for $i=0$ or $1$.
\medskip

{\bf Case A.2}
Suppose now $G_u\cong  D_4$. Let $b\in G_u$ be the involution fixing $v$ and let $c\in G_u$ be an element that cyclically permutes the neighbours of $u$.
Note that $G_u=\la b,c \mid b^2, c^4, (bc)^2\ra$ and $G_{uv} = \la b \ra$. Furthermore, since $|G_v| = 3 |G_{uv}| = 6$, it follows that $G_v \cong C_6$ or
$S_3$. In both cases, let $a$ be a generator of the unique subgroup of order $3$ in $G_v$. Then $G_v = \la a,b\ra$ and we see that $[a,b] = 1$ if $G_v \cong C_6$ and
$(ab)^2=1$ if $G_v \cong S_3$. Since $G$ is generated by $G_v$ and $G_u$, it follows that $G$ is a quotient of the group 
$U_2$ (if $G_v\cong C_6$) or of $U_3$ if ($G_v\cong S_3$).

{\bf Case A.3}
Suppose finally that $G_u\cong A_4$ or $S_4$. Let $x$ and $y$ be the generators of the regular normal subgroup of $G_u$, isomorphic to $C_2\times C_2$.
Observe that $G_u = \la x, y\ra \rtimes G_{uv}$ where $G_{uv}  \cong C_3$ (if $G_u \cong A_4$) or $G_{uv}  \cong S_3$ (if $G_u \cong S_4$). 
Note also that the action of $G_{uv}$ on $\Gamma(v) \setminus \{u\}$ is equivalent to the action of $G_{uv}$ by conjugation on the nontrivial elements of $\la x,y\ra$.
Let $c$ be the element of order $3$ in $G_{uv}$ that cyclically permutes the elements $x,y,xy$ in that order, and
if $G_u\cong S_4$, let $b$ be the involution of $G_{uv}$ for which $y=x^b$.
The generators $x,y,c$ (and possibly $b$) of $G_v$ then satisfy the relations:
$$
 x^2=y^2=[x,y]=c^3=b^2=1,\> x^c=y, y^c = xy, x^b =y, (cb)^2 = 1.
$$
Note that since $c$ is of order $3$, it acts trivially on $\Gamma(v)\setminus \{u\}$; that is $c\in K_v$. Moreover, since $\la c \ra$ is characteristic in $G_{uv}$,
it follows that $\la c \ra$ is normal in $G_v$.

If $G_u \cong A_4$ (and thus $G_{uv} = \la c \ra = K_v$), then $|G_v| = 3 |G_{uv}| = 9$ and therefore $G_v \cong C_9$ or $C_3 \times C_3$.
In the former case, we see that $G_v$ is generated by some element, say $a$, of order $9$ satisfying $a^3=c$. Since $G$ is generated by $G_v$ and $G_u$,
it must be a quotient of the group of the group $U_4$ in Table 1.
On the other hand, if $G_v \cong C_3\times C_3$, then it is generated by $c$ and some element $a$ of order $3$ satisfying
$[a,c] = 1$, and $G$ is a quotient of the group $U_5$ in Table 1.

Suppose now that $G_u \cong S_4$ (and thus $G_{uv} = \la b,c \ra$). Then either $K_v = G_{uv}$, implying that $G_v^{\Gamma(v)} \cong C_3$, or $K_v = \la c \ra$ and thus
$G_v^{\Gamma(v)} \cong S_3$. As observed above, in both cases $\la c \ra$ is normal in $G_v$. Consider the Sylow $3$-subgroup $P$ of $G_v$.
Since $|G_v| = 3 |G_{uv}| = 18$, it follows that $P$ is normal of index $2$ in $G_v$ and is isomorphic either to $C_3\times C_3$ or to $C_9$. Since
$G_v$ contains an involution $b$, it follows that $G_v = P \rtimes \la b\ra$.

 If $P =  C_9$, then let $a$ be its generator such that $a^3 = c$.
 Since $G_v$ contains $G_{uv} \cong S_3$ and is thus  nonabelian,
it follows that $G_v = \la a,b\ra \cong D_9$ (note that $D_9$ is the only nonabelian group of order $18$ containing a cyclic subgroup of order $9$). 
In particular, $G_v$ is generated by $a$ and $b$, subject to relations $a^9=b^2 = (ab) =1$.
The group $G$ is then a quotient of the group $U_6$ in Table~1.

 If $P =  C_3 \times C_3$, then consider the action by conjugation of $\la b \ra \cong C_2$ on the three complements of $\la c \ra$ in $P$.
 This action has at least one fixed point, say $\la a \ra \cong C_3$. The two possibilities for the action of $b$ by conjugation on $\la a \ra$ give rise to
 two possibilities for the stabiliser $G_v$:
 \begin{eqnarray*}
  G_v & = &\la a,b,c \mid a^3,c^3,b^2, [a,c], (bc)^2, [a,b] \ra = \la c \ra \rtimes \la a,b\ra \cong C_3 \rtimes C_6;\\
  G_v & = & \la a,b,c \mid a^3,c^3,b^2, [a,c], (bc)^2, (ab)^2 \ra \cong \GenDih(C_3\times C_3).
 \end{eqnarray*}
 The group $G$ is thus a quotient of the group $U_7$ or the group $U_8$ in Table~1.
 \medskip

{\bf Case B}  Suppose now that $K_u \not = 1$, and thus $K_u \cong C_2$.
Let $d$ be the generator of $K_u$.  Since $K_u$ acts transitively on the set $\Gamma(v)\setminus \{u\}$,
and since $K_v$ is equal to the point-stabiliser of the action of the group $G_{uv}$ on $\Gamma(v)\setminus \{u\}$, it follows that 
$$
 G_{uv}= \la K_v, K_u\ra = K_v \times K_u = K_v \times \la d \ra \cong K_v \times C_2.
$$
Since $G_{uv}$ contains $d$, it follows that $G_v^{\Gamma(v)} \cong S_3$.
Moreover, in view of the isomorphism $G_{uv} \cong G_{uv}^{\Gamma(uv) \setminus \{u,v\}}$,
it follows that $K_v$ is isomorphic to a normal subgroup of $\Sym(\Gamma(u) \setminus \{v\}) \cong S_3$.
 We thus need to consider the cases $K_v = 1$, $K_v \cong C_3$ and $K_v \cong S_3$.
\medskip

{\bf Case B.1} Suppose first that $K_v = 1$.
Then $G_v \cong G_v^{\Gamma(v)}$ and hence $G_v = \la c,d \ra \cong S_3$, where $c$ is an arbitrary element of $G_v$ or order $3$ (and thus satisfying the relation $(cd)^2=1$).
On the other hand, $|G_u| = 4 |G_{uv}| = 8$, implying that $G_u$ is one of the five groups of order $8$ (the three abelian groups of order $8$, the dihedral group $D_4$
or the quaternion group $Q$). 
Note that apart from $C_4\times C_2$, in the remaining four groups of order $8$,
the central involutions are conjugate under the automorphism group of the group, implying that there is essentially a unique way 
how to embed $d$ into any of these groups. On the other hand, there are two types of involutions in $C_4\times C_2$, those that
are squares of elements of order $4$ and those that are not. Here the element $d$ can be embedded in two essentially distinct ways.
 This therefore gives rise to six possible stabilisers $G_u$, listed below, each generating together with $G_v = \la c, d\ra$
a quotient of one of the groups $U_9, \ldots, U_{14}$ in Table~1.
 \begin{eqnarray*}
  G_u & = &\la d,x,y \mid d^2,x^2,y^2, [d,x], [d,y], [x,y] \ra  \cong C_2 \times C_2 \times C_2;\\
  G_u & = &\la d,x \mid d^2,x^4,  [x,d] \ra  \cong C_4\times C_2;\\
  G_u & = &\la d,x,y \mid d^2,x^4,y^2, x^2=d, [x,y] \ra  \cong C_4\times C_2;\\
  G_u & = &\la d,x \mid d^2,x^8, x^4=d \ra  \cong C_8;\\
  G_u & = &\la d,x,y \mid d^2,x^4,y^2, x^2=d, (xy)^2 \ra  \cong D_4;\\
  G_u & = &\la d,x,y \mid d^2,x^4,y^4, x^2=y^2=[x,y]=d \ra  \cong Q.\\
 \end{eqnarray*}

{\bf Case B.2} Suppose now that $K_v \cong C_3$. Then $K_v = \la a \ra$ for some element $a$ of order $3$ which cyclically permutes the
elements of $\Gamma(u) \setminus \{v\}$. Then 
$$
 G_{uv} = K_v \times K_u = \la a,d\ra \cong C_6.
$$
 Moreover, since $G_{uv}$ is transitive both
on $\Gamma(u)\setminus \{v\}$ as well as on $\Gamma(v)\setminus \{u\}$, it follows that $G_u^{\Gamma(u)}$ and $G_v^{\Gamma(v)}$ are doubly transitive groups
and hence $\Gamma$ is locally $(G,2)$-arc-transitive. In particular, $G_v^{\Gamma(u)} \cong S_3$ 
and since $G_{uv}^{\Gamma(u)} \cong C_3$, also $G_u^{\Gamma(u)} \cong A_4$.

Let us now determine the structure of the group $G_v$. Observe first that $|G_v| = 3 |G_{uv}] = 18$
and let $P$ be a Sylow $3$-subgroup of $G_v$. Since $[G_v:P] = 2$, we see that $P$ is normal in $G_v$ and therefore  $a\in P$.
Moreover, since $d\in G_v \setminus P$, the group $G_v$ splits over $P$ into a semidirect product $P \rtimes \la d \ra$. Now consider the
action of $\la d \ra$ upon $P$ by conjugation. If this action is trivial, then $G_v$ is abelian and $K_u = \la d \ra$ 
is normal in both $G_v$ and $G_u$.
But then $K_u$ is normal in $G$, implying that $K_u$ acts trivially on the set of edges of the graph, which is clearly a contradiction.

If $P \cong C_9$, then $\Aut(P)$ contains a unique involution, namely the one inverting the elements of $P$. Hence
$G_v  \cong D_9$. But since $D_9$ contains no subgroup isomorphic to $C_6\cong G_{uv}$, this cannot occur in this case.

Therefore $P \cong C_3 \times C_3$. Now, similarly as in the last paragraph of Case A.3,
consider the action of $\la d \ra$ on the set of the four subgroups of $P$ of order $3$ by conjugation.
Since $\la d \ra$ has order $2$ and already fixes one such group (namely the group $\la a \ra$), it must fix at least one other; let $c$ be its generator.
Since $d$ does not centralise $P$, but centralises $a$, it follows that
$[c,d] \not = 1$, and thus $c^d = c^{-1}$ (or equivalently $(cd)^2=1$).
This shows that 
\begin{equation}
\label{Gv2}
G_v = \la a,c,d \ra \> \hbox{ where } \> c^3= a^3 = [a,c] =1, d^2=[d,a] = (cd)^2=1. 
\end{equation}
Note that $G_v = \la a \ra \times \la c,d\ra \cong C_3 \times S_3$.

Let us now consider the structure of $G_u$. Let $\pi \colon G_u \to G_u^{\Gamma(u)}$ be the epimorphism that maps each element $g\in G_u$
to the permutation induced by $g$ on $\Gamma(u)$.
Note that the kernel of $\pi$ is $K_u = \la d \ra$ and in particular that $\la d \ra$ is normal (and therefore central) in $G_u$.
Recall that $G_u^{\Gamma(u)} \cong A_4$,
 let $V$ be the regular normal subgroup of $G_u^{\Gamma(u)}$, isomorphic to
the Klein group $C_2\times C_2$, and let $P=\pi^{-1}(V)$. Then $P$ is a normal Sylow $2$-subgroup of $G_u$, implying that $d\in P$ and
$G_u = P \rtimes \la a \ra$. Since $\la a \ra$ is normal in $G_v$, it is not normal in $G_u$ (since otherwise it would act trivially 
on the edge set of $\Gamma$), implying that the action of $\la a \ra$ upon $P$ by conjugation
is nontrivial. Note that the only groups of order $8$ that admit an automorphism of order $3$ are the elementary abelian group $C_2^3$ and the quaternion group $Q$. This leaves us with two possibilities: $P\cong C_2^3$ and $P \cong Q$.
\medskip

{\bf B.2.1} Suppose that $P\cong C_2^3$. Then the fact that $a$ centralises $d$ implies that $\la a \ra$ fixes one of the four complements of $\la d\ra$ in $P$.
Clearly we may choose the generators $x$ and $y$ of that complement in such a way that  $x^a = y$ and $y^a=xy$.
 We have thus shown that $G_u$ is generated by elements $x,y,d,a$, which, in addition to relations in (\ref{Gv2}), satisfy also the following:
  \begin{equation}
   x^2=y^2=[x,d] = [y,d] = [x,y] = 1, x^a = y,  y^a = xy. 
\end{equation}
The group $G=\la G_u, G_v\ra$ must thus be a quotient of the group $U_{15}$ in Table~1. Observe also that $G_u \cong \la d \ra \times \la x,y,a\ra \cong C_2 \times A_4$.
\medskip

{\bf B.2.2} Suppose now that $P$ is isomorphic to the quaternion group $Q$.
Since the centre of $Q$ is of order $2$ and since $d$ is central in $P$, it follows that
$Z(P) = \la d \ra$. Furthermore, since $a$ centralises $d$, the partition $\{ \{g,gd\} : g\in P \setminus \{1,d\}\}$ of the set $P \setminus \{1,d\}$ into
the nontrivial cosets of $\la d \ra$ is $\la a \ra$-invariant. Hence the action of $\la a\ra$ on $P$ by conjugation gives rise
to two orbits on $\{ \{g,gd\} : g\in P \setminus \{1,d\}\}$, each intersecting each pair $\{g,gd\}$ in a unique point.
Moreover, since $P/\la d \ra \cong C_2\times C_2$, if $\{x,xd\}$ and $\{y,yd\}$ are distinct nontrivial cosets, then $\{xy,xyd\}$ is the third one.
Now choose $x_0 \in P \setminus \{1,d\}$ and let $y_0 = x_0^a$.
Then $y_0^a \in \{x_0y_0, x_0y_0d\}$.
If $y_0^a = x_0y_0$, then let $x=x_0$ and $y=y_0$;
otherwise, let $x=x_0d$ and $y=y_0d$. Observe that in both cases it follows that $x^a = y$ and $y^b = xy$.
We have thus shown that
 $G_u$ is generated by elements $x,y,d,a$, which, in addition to relations in (\ref{Gv2}), satisfy also the following:
  \begin{equation}
   x^4=y^4=1, x^2=y^2=[x,y] = d, x^a = y,  y^a = xy. 
\end{equation}
The group $G=\la G_u, G_v\ra$ is therefore a quotient of the group $U_{16}$ in Table~1.
\medskip

{\bf Case B.3} Suppose finally that $K_v \cong S_3$. Then $K_v$ is generated by some elements $a,b$ satisfying $a^3=b^2=(ab)^2 = 1$ and thus
\begin{equation}
\label{Guv1.1}
G_{uv} = K_v \times K_u = \la a,b,d\ra \cong S_3 \times C_2.
\end{equation}

Let us first determine the structure of the vertex-stabiliser $G_v$.
Since the centre and the outer automorphism group of $K_v\cong S_3$ are both trivial,
it is not difficult to see that $K_v$ must be a direct factor in every group in which it is normal (see for example  \cite[13.5.8 and Exercise 13.5]{rob}).
In particular, $G_v =K_v \times H$ where $H \cong H^{\Gamma(v)} = G_v^{\Gamma(v)} \cong S_3$.
Now consider the group $H_u = H \cap G_u = H\cap G_{uv}$. 
Since $H$ is normal in $G_v$, it follows that $H_u$ is normal in $G_v \cap G_u = G_{uv}$.
However, since $H\cong H^{\Gamma(v)} \cong S_3$, we see that $H_u \cong S_2$, implying that $H_u$ is central in $G_{uv}$.
However, the centre of $G_{uv}=\la a,b,d\ra$ is the group $\la d \ra$, showing that
$H_u = \la d \ra = K_u$.

If we let $c$ be an element of order $3$ in $H\cong S_3$, then $H = \la c,d\ra$, with $(dc)^2 =1$.
Note that $c$ commutes with $a$ and $b$ (since $c\in H$ and $G_v =K_v \times H$). This implies that
$G_v$ is generated by elements $a,b,c,d$, subject to relations
\begin{equation}
\label{eq:Gv1}
 a^3=b^2=c^3=d^2=(ba)^2=(dc)^2 = [a,c] = [a,d] = [b,c] = [b,d] =1.
 \end{equation}
 In particular, $G_v = \la a,b \ra \times \la c, d\ra \cong S_3 \times S_3$ is isomorphic to the group $L_4$ in Table~1.

We will now determine the structure of the group $G_u$. Observe first that $|G_u| = 4 |G_{uv}| = 48$. Since $|K_u|=2$, it follows that $|G_u^{\Gamma(u)}| = 24$ and
hence $G_u^{\Gamma(u)} \cong S_4$. Note also that since $\la d \ra = K_u$ is a normal subgroup of $G_u$ of order $2$, the element $d$ is central in $G_u$.
As in Case B.2.1, let $\pi \colon G_u \to G_u^{\Gamma(u)}$ denote the group epimorphism that maps an element $g\in G_u$ to
the permutation induced by the action of $g$ on $\Gamma(u)$. 

Further, let $V$ be the regular normal subgroup of $G_u^{\Gamma(u)}$,
isomorphic to the Klein group $C_2\times C_2$, and let $P = \pi^{-1}(V)$ be its preimage in $G_u$.
Then $P$ is  normal in $G_u$ and has order $8$, Moreover,
$$ 
 P / \la d \ra \> \cong \> V \> \hbox{ and } \>  G_u/ P \> \cong \> G_u^{\Gamma(u)} / V \> \cong\> S_3.
$$
Since $V$ is transitive on $\Gamma(u)$, so is $P$, implying that $G_u = PG_{uv}$. However, $G_{uv} = K_u K_v$ and $K_u\le P$,
showing that $G_u = P K_v$.  Since $|G_u| = 48 = 8\cdot 6 = |P|\, |K_v|$, it follows that $P\cap K_v = 1$ and thus
\begin{equation}
 G_u = P \rtimes K_v.
\end{equation}

Let us now consider the action of $K_v = \la a,b\ra \cong S_3$ on $P$ by conjugation. 
If the kernel of this action is nontrivial, then it contains $a$ (since $\la a \ra$ is the unique minimal
normal subgroup of $K_v$). In this case, $G_u$ contains a subgroup isomorphic to $P\times C_3$,
implying that $G_u^{\Gamma(u)}$ contains a subgroup isomorphic to $(P\times C_3) /\la d \ra \cong V \times C_3$.
This contradicts the fact that $G_u^{\Gamma(u)}$ is isomorphic to $S_4$, and thus
shows that $K_v \cong S_3$ embeds into $\Aut(P)$. 
Out of five groups of order $8$ (the three abelian ones,
the dihedral group $D_4$ and the quaternion group) only the elementary abelian group of order $8$ and the quaternion group admit $S_3$ as a group of automorphisms.
\medskip

{\bf Case B.3.1} Suppose $P$ is elementary abelian. Then we proceed similarly as in Case B.2.1.
Since $K_v$ centralises $d$, it permutes (via conjugation) the set of $4$ complements of $\la d \ra$ in $P$.
Since every action of $S_3$ on $4$ points has at least one fixed point, this implies that $K_v$ normalises at least one complement
of $\la d \ra$ in $P$, say $W$.
Since $K_v$ acts trivially on $\la d \ra$, it must acts faithfully as a group of automorphisms of $W$. But since $\Aut(W) = \GL(2,3) \cong S_3$, the action of $K_v$
on $W$ is uniquely determined. In particular, we may choose generators $x$ and $y$ of $W$ in such a way that $x^a = y$, $y^a=xy$, $x^b = x$ and $y^b = xy$.

 We have thus shown that $G_u$ is generated by elements $x,y,d,a,b$, which, in addition to relations in (\ref{eq:Gv1}), satisfy also the following:
  \begin{equation}
\label{eq:Gu1}
    x^2=y^2=[x,d] = [y,d] = [x,y] = 1, x^a = y,  y^a = xy,  x^b = x,  y^b = xy. 
\end{equation}
Note that $G_u = \la d,x,y\ra \rtimes \la a,b\ra \cong (C_2^3)\rtimes S_3$, as well as $G_u = \la d \ra \times \la x,y,a,b \ra \cong C_2 \times S_4$. In particular,
$G_u$ is isomorphic to the group $R_{17}$ in Table 1.
Finally, since $G$ is generated by $G_v$ and $G_u$, formulas (\ref{eq:Gv1}) and (\ref{eq:Gu1}) show that $G$ is a quotient of the group $U_{17}$ in Table 1.
\medskip

{\bf Case B.3.2} Suppose now that $P$ is the quaternion group. This case in analogous to Case B.2.2.
Since the centre of $P$ is of order $2$ and since $d$ is central in $P$, it follows that
$Z(P) = \la d \ra$. Furthermore, since $K_v$ centralises $d$, the partition $\{ \{g,gd\} : g\in P \setminus \{1,d\}\}$ of the set $P \setminus \{1,d\}$ into
the nontrivial cosets of $\la d \ra$ is $K_v$-invariant.
Hence the action of $K_v=\la a\ra$ on $P$ by conjugation gives rise two orbits on $\{ \{g,gd\} : g\in P \setminus \{1,d\}\}$, 
each intersecting each pair $\{g,gd\}$ in a unique point.
Moreover, since $P/\la d \ra \cong C_2\times C_2$, if $\{x,xd\}$ and $\{y,yd\}$ are distinct nontrivial cosets, then $\{xy,xyd\}$ is the third one.
Clearly $b$ fixes at least one coset setwise, say $\{x_0, x_0d\} \subseteq P \setminus \{1,d\}$.
Let $y_0 = x_0^a$ and note that $y_0^a \in \{x_0y_0, x_0y_0d\}$.
If $y_0^a = x_0y_0$, then let $x=x_0$ and $y=y_0$;
otherwise, let $x=x_0d$ and $y=y_0d$. In both cases we see that $x^a = y$.
Moreover, in the former case, we clearly have also $y^a = xy$. 
However, even in the latter case, we see that $y^a = (y_0d)^a = y_0^ad = (x_0y_0d)d = (x_0d)(y_0d) = xy$.
Hence in both cases we see that $y^a = xy$.
On the other hand, since we have chosen $x_0$ in such a way that $b$ fixes $\{x_0, x_0d\}$ setwise,
we have either $x^b = x$ or $x^b=xd$. If $x^b = x$, then 
$y^b = x^{ab} = (x^b)^{a^{-1}} = xy$,
and since $b$ is an involution, also $(xy)^b = y$.
But then 
$y = (xy)^b = (dyx)^b = d^by^bx^b=dxyx = yx^2=yd$.
This contradiction shows that $x^b = xd = x^{-1}$, and consequently, $y^b = x^{ab} = (x^b)^{a^{-1}} = (xy)^{-1} = xyd=yx$.
Since any two generators $x,y$ of the quaternion group satisfy the relations $x^4=y^4 = 1$, $x^2=y^2=[x,y] = d$ (where $d$ is the central involution), it follows that
$G_u$ is generated by elements $x,y,d,a,b$ which, in addition to (\ref{eq:Gv1}) satisfy also the relations:
 \begin{equation}
  \label{eq:Gu2}
    x^4=y^4=1, x^2=y^2=[x,y] = d,  x^a = y,  x^b = x^{-1},  y^a = xy,   y^b = yx. 
\end{equation}
Since $G$ is generated by $G_v$ and $G_u$, formulas (\ref{eq:Gv1}) and (\ref{eq:Gu2}) show that $G$ 
is a quotient of the group $U_{18}$ in Table 1. This completes the proof of Theorem~\ref{the:main}.

\section{Locally arc-transitive group actions with large edge kernel}
\label{sec:large}

In this section we first show that the edge kernel in a locally arc-transitive graph of valence $\{3,4\}$
can be arbitrary large. This will be proved by means of the so called {\em subdivided doubles},
a construction that was introduced in \cite{PW} and that can be described as follows.

Let $\Lambda$ be a  $G$-arc-transitive $k$-valent graph. The {\em subdivision of $\Lambda$},
denoted by $\SS\Lambda$,
 is the bipartite graph of valence $\{2,k\}$ with vertex set $V(\Lambda) \cup E(\Lambda)$  in which 
each $e\in E(\Gamma)$ is adjacent to the endpoints of $e$ in $\Lambda$. The {\em subdivided double}
of $\Lambda$ is then the graph obtained from $\SS\Lambda$ by blowing up each original vertex $v\in V(\Lambda)$
to a pair of vertices, each being adjacent to the neighbours of $v$ in $\SS\Lambda$. More precisely, the vertex set of
$\DD_2$ can be defined as the disjoint union $(\ZZ_2\times V(\Lambda)) \cup E(\Lambda)$ with edges of the form
$(i,v)e$ for any $i\in \ZZ_2$, $v\in V(\Lambda)$  and $e\in E(\Lambda)$ such that $v$ is an endpoint of $e$. Note that for any
$v\in V(\Lambda)$, the permutation that interchanges the vertices $(0,v)$ and $(1,v)$ in $\DD_2\Lambda$ and
fixes all other vertices of $\DD_2\Lambda$ is an automorphism of $\DD_2\Lambda$. The group generated by all such
automorphisms is elementary abelian of order $2^{|V(\Lambda)|}$. Together with the group induced by the obvious 
action of $G$ on $\DD_2\Lambda$, this group generates a group which acts locally arc-transitively on $\DD_2\Lambda$
and has a large edge kernel. This can be summarised as follows (the proof can be found in \cite[Lemma 4.2]{PW}).

\begin{lemma}
\label{lem:largekernel}
 If $\Lambda$ is arc-transitive graph of valence $k$ and with $n$ vertices, then $\DD_2\Lambda$ is a locally $G$-arc-transitive graph of valence
 $\{2,k\}$ for some group $G$ with the edge kernel $G_{uv}^{[1]}$ of order divisible by $2^{n-2}$.
\end{lemma}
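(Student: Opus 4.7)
The plan is to construct $G$ explicitly as the subgroup of $\Aut(\DD_2\Lambda)$ generated by two natural families of automorphisms. For each $w \in V(\Lambda)$ the permutation $\tau_w$ of $V(\DD_2\Lambda)$ that swaps $(0,w)$ with $(1,w)$ and fixes every other vertex preserves adjacency, and distinct $\tau_w$ commute; hence $T = \la \tau_w : w \in V(\Lambda) \ra$ is elementary abelian of order $2^n$. Moreover, any arc-transitive $A \le \Aut(\Lambda)$ lifts to a subgroup $\tilde A \le \Aut(\DD_2\Lambda)$ via $g\cdot(i,v) = (i, g\cdot v)$ and $g\cdot e = g\cdot e$. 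Since $g^{-1}\tau_w g = \tau_{g\cdot w}$, the group $T$ is normalised by $\tilde A$, so I set $G = T \rtimes \tilde A$.

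For local arc-transitivity, I would handle the two vertex types separately. The neighbours of $(i,v) \in \ZZ_2 \times V(\Lambda)$ in $\DD_2\Lambda$ are precisely the edges $e \in E(\Lambda)$ incident to $v$; the lift of $A_v$ fixes $(i,v)$ and permutes these edges exactly as $A_v$ permutes the arcs of $\Lambda$ at $v$, so arc-transitivity of $A$ yields transitivity of $G_{(i,v)}$ on the neighbourhood of $(i,v)$. The neighbours of $e \in E(\Lambda)$, with endpoints $u,v$ in $\Lambda$, are the four vertices $\{(j,u),(j,v) : j \in \ZZ_2\}$; the involutions $\tau_u, \tau_v$ fix $e$ and implement the two ``fibre flips'', while any $g \in A$ reversing the arc $(u,v)$ lifts to an element of $\tilde A$ fixing $e$ and sending $(j,u) \leftrightarrow (j,v)$ for each $j$. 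A short check shows that the resulting subgroup of $G_e$ acts transitively on these four neighbours.

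For the edge kernel, fix an edge $\{(i,v), e\}$ of $\DD_2\Lambda$, where $e$ has endpoints $u,v$ in $\Lambda$, and set $S = V(\Lambda) \setminus \{u,v\}$, so $|S| = n-2$. For each $w \in S$ the switch $\tau_w$ moves only $(0,w)$ and $(1,w)$ and so certainly fixes $(i,v)$ and $e$; it also fixes every neighbour of $e$ (each such neighbour lies in the fibre over $u$ or $v$) and every neighbour of $(i,v)$ (each such neighbour lies in $E(\Lambda)$, and all of $E(\Lambda)$ is fixed point-wise by $\tau_w$). Hence $\tau_w \in G_{(i,v)e}^{[1]}$ for every $w \in S$. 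Since the $\tau_w$ with $w \in S$ commute and are $\ZZ_2$-linearly independent in $T$, they generate a subgroup of $G_{(i,v)e}^{[1]}$ isomorphic to $C_2^{n-2}$, and the order of the edge kernel is therefore divisible by $2^{n-2}$. The main point requiring care is precisely this verification that $\tau_w \in G_{(i,v)e}^{[1]}$ for every $w \notin \{u,v\}$; but once the two relevant neighbourhoods are recognised as living in $E(\Lambda)$ and in the fibres over $\{u,v\}$ respectively, the check is immediate from the definition of adjacency in $\DD_2\Lambda$.
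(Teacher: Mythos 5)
Your construction is correct and is exactly the one the paper sketches in the paragraph preceding the lemma (the switching involutions $\tau_w$ together with the lifted arc-transitive group, with the formal proof deferred to \cite{PW}); the decisive step, namely that the $n-2$ switches $\tau_w$ with $w$ not an endpoint of $e$ fix both endpoints of the edge $\{(i,v),e\}$ and all of their neighbours, is carried out correctly. Note only that your (correct) computation gives the edge-vertices of $\DD_2\Lambda$ valence $4$, so its valence set is $\{4,k\}$ rather than the $\{2,k\}$ appearing in the statement, which is evidently a typo (for $k=3$ this is the valence $\{3,4\}$ needed in the paper).
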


%
%

Following \cite[Definition~4.1]{PW} we shall call a graph {\em unworthy} if two of its vertices share the same neighbourhood.
The following lemma is a converse of Lemma~\ref{lem:largekernel} in the context of locally $G$-arc-transitive graphs of valence $\{3,4\}$.

\begin{lemma}
\label{lem:unworthy}
Let $\Gamma$ be a connected unworthy locally $G$-arc-transitive graph of valence $\{3,4\}$. Then
$\Gamma$ is isomorphic either to the complete bipartite graph $K_{3,4}$ or to the subdivided double $\DD_2 \Lambda$
for some connected cubic $G$-arc-transitive graph $\Lambda$.
\end{lemma}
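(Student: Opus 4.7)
The strategy is to consider the $\Aut(\Gamma)$-invariant twin-equivalence $x\sim y\iff \Gamma(x)=\Gamma(y)$ and to read off the structure of $\Gamma$ from its quotient. Since the two valences differ, twins lie in the same bipartition class, and because $G$ has exactly two vertex-orbits (the bipartition classes), the $\sim$-classes have a common cardinality $m$ on the valence-$3$ side and a common cardinality $n$ on the valence-$4$ side; unworthiness of $\Gamma$ amounts to $\max(m,n)\ge 2$.

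The main local observation is this: fix a valence-$3$ vertex $u$ with $\Gamma(u)=\{e_1,e_2,e_3\}$, and note that any twin of $e_i$ is necessarily also a neighbour of $u$, so $[e_i]\subseteq\Gamma(u)$ and $n=|[e_i]|$. Since $G_u$ is transitive on $\Gamma(u)$ and preserves $\sim$, the $\sim$-classes inside $\Gamma(u)$ are equal-sized blocks whose size divides $3$; hence $n\in\{1,3\}$. The analogous argument at a valence-$4$ vertex yields $m\in\{1,2,4\}$. If $n=3$, then $e_1,e_2,e_3$ share a common $4$-element neighbourhood $\{u,u_1,u_2,u_3\}$, and applying the same observation to each $u_i$ shows that $u,u_1,u_2,u_3$ are pairwise twins; the induced subgraph on these seven vertices is a copy of $K_{3,4}$ that already saturates all valences, so by connectedness $\Gamma\cong K_{3,4}$. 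Symmetrically, $m=4$ forces $\Gamma\cong K_{3,4}$.

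Now assume $\Gamma\not\cong K_{3,4}$, so $n=1$ and, by unworthiness, $m=2$. In particular every valence-$3$ vertex has a unique twin, and at any valence-$4$ vertex $e$ the divisibility above forces $\Gamma(e)$ to split into exactly two twin pairs. Define $\Lambda$ to have as vertex set the twin pairs of valence-$3$ vertices and as edge set the valence-$4$ vertices of $\Gamma$, declaring that $e$ joins the two twin pairs appearing in $\Gamma(e)$. These two pairs are distinct (otherwise $\Gamma(e)$ would be a single twin pair of size $2\ne 4$), and two distinct valence-$4$ vertices cannot define the same edge (else they would be twins, contradicting $n=1$); hence $\Lambda$ is a simple cubic graph, connected because $\Gamma$ is. The $G$-action descends to $\Lambda$ and is edge-transitive (as $G$ is transitive on valence-$4$ vertices of $\Gamma$) and arc-transitive (transitivity of $G_e$ on $\Gamma(e)$ forces some element of $G_e$ to swap the two twin pairs, i.e.\ to swap the endpoints of $e$ in $\Lambda$). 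Labelling each twin pair arbitrarily by $\ZZ_2$ now produces the obvious bijection $\Gamma\to\DD_2\Lambda$ that preserves adjacencies, giving $\Gamma\cong\DD_2\Lambda$.

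\textbf{Main obstacle.} The heart of the argument is the local-divisibility analysis pinning $(m,n)$ to either $(4,3)$ or $(2,1)$; once this is secured, the construction of $\Lambda$ and the identification $\Gamma\cong\DD_2\Lambda$ are essentially formal verifications of simplicity, cubicity, arc-transitivity, and adjacency-preservation.
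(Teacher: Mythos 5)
Your proof is correct and takes essentially the same approach as the paper: both arguments rest on the $G$-invariant twin partition, the observation that a twin class contained in a neighbourhood has size dividing the valence, the identification of $K_{3,4}$ in the degenerate cases, and the construction of the cubic graph $\Lambda$ from the twin pairs. The only cosmetic difference is that the paper passes through the intermediate $\{2,3\}$-biregular quotient $\Gamma_{\mathcal P}$ and rules out $4$-cycles there before suppressing the valence-$2$ vertices, whereas you build $\Lambda$ directly with twin pairs as vertices and valence-$4$ vertices as edges; your check that distinct valence-$4$ vertices yield distinct edges of $\Lambda$ is exactly the paper's no-$4$-cycle argument.
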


\begin{proof}
For every vertex $u$ of $\Gamma$,
let $B(u)$ be the set of all vertices of $\Gamma$ that have the same neighbourhood as $u$. Since $\Gamma$ is unworthy,
we see that for some vertex $u$, the size of $B(u)$ is at least $2$; let $U$ be the part of the bipartition of $\Gamma$ containing $u$
and let $W=V(\Gamma) \setminus U$.

Since $\Gamma$ is locally $G$-arc-transitive, the group $G$ acts transitively on $U$ as wee as on $W$
and the set $\P=\{B(v) : v \in U\}$ is a $G$-invariant partition of $U$.
In particular, every $w\in W$ is adjacent to a fixed number of blocks in $\P$.
Observe also that for any given $B\in \P$ and $w\in W$, the vertex $w$ is adjacent 
either to none or to all of the vertices in $B$. In particular, the size of $B$ is a divisor of the valence of $w$. Since $|B| \ge 2$,
this shows that  either the valence of $w$ is $4$ and $|B| = 2$, or the valence of $w$ equals the size of $B$. In the latter case
each vertex of $W$ is adjacent to precisely one block in $\P$, which, together with connectivity of $\Gamma$, implies that
$\Gamma\cong K_{3,4}$. 

We may thus assume that $|B| =2$ and that the valence of $w$ is $4$ and that no two vertices of valence $4$ share
the same neighbourhood.
 Now consider the bipartite graph $\Gamma_\P$ the vertex set
of which is $\P \cup W$ and with a vertex $w$ adjacent to a vertex $B\in \P$ if and only if $w$ is adjacent in $\Gamma$ to the vertices
in $B$. Since each $w \in W$ is adjacent to precisely two blocks in $\P$ and each block in $\P$ is adjacent to three vertices in $W$,
this graph is biregular of valence $\{2,3\}$. Observe also that the action of $G$ on $V(\Gamma)$ induces a faithful locally arc-transitive
action of $G$ on $V(\Gamma_\P)$ as a group of automorphisms of $\Lambda$. 

We shall now prove that $\Gamma_\P$ contains no cycles of length $4$.
Indeed, if $B,C\in \P$ and $w,v\in W$ are such that
$BwCv$ is a cycle of length $4$ in $\Gamma_\P$, then $w$ and $v$ are both adjacent to all four vertices
of $B\cup C$ in $\Gamma$. But then $w$ and $v$ share the same neighbourhood, which contradicts our assumption that
no two vertices of degree $4$ have that property. This shows that $\Gamma_\P$ contains no cycles of length $4$.


It is now clear that if one suppresses all the vertices of $\Gamma_\P$ of valence $2$ (that is, deletes every vertex $v$ of degree $2$ and
replaces the $2$-path having $v$ as a middle vertex by a single edge), a cubic $G$-arc-transitive graph $\Lambda$ is obtained.
Moreover, it follows easily from the definition of the operator $\DD_2$ and the way we obtained $\Gamma_\P$ that
$\Gamma \cong \DD_2\Lambda$.
\end{proof}

\section{The list of graphs}
\label{sec:data}

As described in Section~\ref{sec:intro}, one can use Theorem~\ref{the:main} to compile a complete list of 
graphs of valence $\{3,4\}$ and bounded order admitting a locally arc-transitive group of automorphisms 
with trivial edge kernel. We have carried out this computation for graph of order at most $350$ and thus obtained
$220$ pairwise non-isomorphic graphs, which are accessible in magma code at \cite{web}.

In Table~2 below, some graph-theoretical parameters for the $42$ graphs on up to $100$ vertices are computed.
Each line in the table corresponds to one of the $42$ graphs. The first item in each line is the ID of the graph and has form $[n,i]$,
where $n$ is the number of the vertices of the graph and $i$ is an index of that graph within the family of all the graphs on $n$
vertices in the table. (The graph with ID $[n,i]$ is stored at \cite{web} in a magma code under the name ${\rm LAT34}[n,i]$.)
Next three items in each line correspond to the girth (the length of a shortest cycle), diameter, and worthiness of the graph.
Then a few parameters pertaining the full automorphism group $A$ of the graph are given. First, for a pair of
adjacent vertices $v$ and $u$ of valency $3$ and $4$, respectively, the permutation groups $A_v^{\Gamma(v)}$ and
$A_u^{\Gamma(u)}$ are computed. Further, the parameters $s_v$ and $s_u$, corresponding to the largest integers $s$
such that $A_v$ (resp.\ $A_u$) is transitive on the $s$-arcs starting at vertex $v$ (resp.\ $u$), are given. The next two items
are the orders of the edge stabiliser $A_{uv}$ and the edge kernel $A_{uv}^{[1]}$ in $A$.
Note that, since the full automorphism group $A$ might be
larger than any of the groups $G$ arising as quotients of the groups $U_i$, the edge kernel $A_{uv}^{[1]}$ can be non-trivial.
The last item in a line is named ``comments'' and gives extra information, such as, whether the graph is isomorphic to
some well-known graph or a subdivided double of a graph appearing in the census of cubic arc-transitive graphs \cite{ConDob}
(here $Q_3$ stands for the graph of the cube,
$\Pet$ stands for the Petersen graph, and the symbols F014,F016, F018, F020A, F020B, F024, F026, F028 
stand for the cubic graphs described in \cite{ConDob}); further comments are provided below.

Let us mention at this point two interesting connections of the topic of this paper with two, at first glance unrelated fields of mathematics.

The first one is the theory of coset geometries which stems from the ingenious work of Tits \cite{tits}.
A {\em coset geometry of rank $2$} is simply a triple of groups $(G:G_0,G_1)$ with $G_i \le G$ for $i\in \{0,1\}$.
To such a coset geometry, one can associate an {\em incidence geometry of rank $2$} with points and lines being the cosets of $G_0$ and $G_1$
in $G$, respectively, and a point $G_0a$ being incident with a line $G_1b$ if and only if $G_0a\cap G_1b \not = \emptyset$.
Note that the incidence graph of this geometry is precisely the coset graph $\Cos(G,G_0,G_1)$. Coset geometries of rank $2$
and the corresponding incidence structures have received a considerable attention in the last few years (see \cite{M,SL}, for example).
The results of this paper can be easily interpreted in the language of that particular field of mathematics.

Let us finally mention an interesting relationship of the graphs in Table~2 with flag-transitive  configurations.
A {\em configuration of type} $(v_r;b_k)$ is determined by the set $\P$ of $v$ points and a set $\B$ of $b$ subsets  of $\P$ of size $k$
such that every point in $\P$ lies in precisely $r$ sets of $\B$ and such that every pair of points lies in at most one set in $\B$.
A pair $(x,B)$ with $x\in \P$, $B\in \B$ and $x\in B$ is called a {\em flag}. The configuration is {\em flag-transitive} if the automorphism
group of the configuration acts transitively on the flags. The incidence graph of the configuration (also called the {\em Levi graph})
is the graph with vertex set $\P \cup \B$ and edges of the form $xB$ where $(x,B)$ is a flag of the configuration. It is easy to see that
the incidence graph of a flag-transitive configuration is locally arc-transitive and of girth at least $6$; conversely, every locally arc-transitive graph
of girth at least $6$ is an incidence graph of some configuration
 (we refer to \cite{baum} for general informaiton about configurations and to \cite{TomoDragan} for relationship between flag-transitive configurations and locally arc-transitive graphs). 

In the above sense, those graphs in Table~2 that have girth at least $6$ correspond to some flag-transitive configurations
of type $(v_4,b_3)$ (with those of girth at least $8$ corresponding to triangle-free configurations; see \cite{BGPZ}). Configurations of type
$(v_3,b_3)$ and $(v_4,b_4)$, also denoted simply by $(v_3)$ and $(v_4)$, have received by far the greatest attention  (see \cite{v3,BP} and \cite{betten} for enumeration results). Configurations of type $(v_4, b_3)$ (which correspond to graphs of valence $\{3,4\}$)
have also been studied, but much less is know about them (see \cite{gropp}, for example). 
The graphs in Table~2 can therefore be viewed as a contribution to the theory of flag-transitive configurations of type $(v_4, b_3)$.
\medskip

Here are some additional comments:
\medskip

(1) The graph with ID $[14,2]$ is the incidence graph of the point--side incidence structure of a cube; that is,
the vertices of the graph $[14,2]$ can be identified with the the $8$ points and $6$ sides of a cube in such a way that edges correspond to
incident pairs point-side. Hence the automorphism group of the graph $[14,2]$ coincides with the group of symmetries of the cube and is thus
isomorphic to $S_4\times C_2$.
\medskip

(2) The graph with ID $[21,2]$ is the incidence graph of the point--line geometry of the affine plane $\ZZ_3^2$; ie. the 9 vertices of
valence $4$ correspond to the 9 points in $\ZZ_3^2$, the $12$ vertices of valence $3$ correspond to the $12$ lines in $\ZZ_3^2$
and a point is adjacent to a line whenever it lies on the line.
\medskip

(3) The graphs with ID $[28,2]$ and $[28,3]$ are the incidence graphs of  one of $574$ configurations of type $(12_4;16_3)$ (see \cite{gropp}).
The configuration corresponding to the graph $[28,3]$ is the so called {\em Reye configuration}; see for example 
\cite{SerSer}. It is interesting that both configurations can be realised in the plane by straight lines only.
Let us also mention the following interesting construction of the graph $[28,3]$. Let $U=\ZZ_4^2$,  and let 
$W=\{(X, Y) : X \subseteq \ZZ_4, |X|=2, Y \in \{X,\ZZ_4 \setminus X\}\}$. The graph $[28,3]$ can then be viewed
as the graph with vertex set $U \cup W$ and with $(x,y) \in U$ being adjacent to $(X,Y) \in W$ if and only if $x\in X$ and $y\in Y$.
\bigskip

\begin{small}
\begin{center}
Table~2: The list of connected graphs of valence $\{3,4\}$ on at most $100$ vertices  \\
admitting a locally arc-transitive group with a trivial edge kernel 
\bigskip

\begin{tabular}{|c|c|c|c|c|c|c|c|c|}
\hline
ID & girth & diameter & worthy & $(A_v^{\Gamma(v)},A_u^{\Gamma(u)})$ & $(s_v,s_u)$ & $|A_{uv}|$ & $|A_{uv}^{[1]}|$ & comments \\
 \hline \hline
$[7,1]$ & $ 4 $ & $ 2 $ & no & $(S_3,S_4)$ &  $(3,3)$ & 
           $2^{2} \cdot 3$ &  $1$ & $K_{3,4}$ \\ \hline 
$[14,1]$ & $ 4 $ & $ 4 $ & no & $(S_3,D_4)$ &  $(1,2)$ & 
           $2^{4}$ &  $2^{2}$ &  $\SD(K_4)$  \\ \hline 
$[14,2]$ & $ 4 $ & $ 4 $ & yes & $(S_3,D_4)$ &  $(1,2)$ & 
           $2$ &  $1$ & see (1) \\ \hline 
$[21,1]$ & $ 4 $ & $ 4 $ & no & $(S_3,D_4)$ &  $(1,2)$ & 
           $2^{7}$ &  $2^{5}$ &  $\SD(K_{3,3})$ \\ \hline 
$[21,2]$ & $ 6 $ & $ 4 $ & yes & $(S_3,S_4)$ &  $(3,4)$ & 
           $2^{2} \cdot 3$ &  $1$ & see (2) \\ \hline 
$[28,1]$ & $ 4 $ & $ 6 $ & no & $(S_3,D_4)$ &  $(1,2)$ & 
           $2^{8}$ &  $2^{6}$ &  $\SD(Q_3)$ \\ \hline 
$[28,2]$ & $ 6 $ & $ 4 $ & yes & $(S_3,D_4)$ &  $(1,2)$ & 
           $2^{2}$ &  $1$ &  see (3) \\ \hline 
$[28,3]$ & $ 6 $ & $ 4 $ & yes & $(S_3,S_4)$ &  $(3,3)$ & 
           $2^{2} \cdot 3$ &  $1$ & Reye; see (3)\\ \hline 
$[35,1]$ & $ 4 $ & $ 6 $ & no & $(S_3,D_4)$ &  $(1,2)$ & 
           $2^{11}$ &  $2^{9}$ &  $\SD(\Pet])$ \\ \hline 
$[35,2]$ & $ 6 $ & $ 6 $ & yes & $(S_3,S_4)$ &  $(3,3)$ & 
           $2^{2} \cdot 3$ &  $1$ & \\ \hline 
$[42,1]$ & $ 6 $ & $ 4 $ & yes & $(S_3,D_4)$ &  $(1,2)$ & 
           $2^{4}$ &  $2^{2}$ & \\ \hline 
$[42,2]$ & $ 6 $ & $ 4 $ & yes & $(S_3,D_4)$ &  $(1,2)$ & 
           $2^{4}$ &  $2^{2}$ & \\ \hline 
$[49,1]$ & $ 4 $ & $ 6 $ & no & $(S_3,D_4)$ &  $(1,2)$ & 
           $2^{16}$ &  $2^{14}$ &  $\SD(F[14,1])$ \\ \hline 
$[49,2]$ & $ 6 $ & $ 6 $ & yes & $(S_3,D_4)$ &  $(1,2)$ & 
           $2$ &  $1$ & \\ \hline 
$[49,3]$ & $ 6 $ & $ 5 $ & yes & $(S_3,D_4)$ &  $(1,2)$ & 
           $2^{2}$ &  $1$ & \\ \hline 
$[56,1]$ & $ 4 $ & $ 8 $ & no & $(S_3,D_4)$ &  $(1,2)$ & 
           $2^{16}$ &  $2^{14}$ &  $\SD(F[16,1])$ \\ \hline 
$[56,2]$ & $ 6 $ & $ 6 $ & yes & $(S_3,D_4)$ &  $(1,2)$ & 
           $2$ &  $1$ & \\ \hline 
$[56,3]$ & $ 6 $ & $ 6 $ & yes & $(S_3,D_4)$ &  $(1,2)$ & 
           $2^{4}$ &  $2^{2}$ & \\ \hline 
$[56,4]$ & $ 8 $ & $ 6 $ & yes & $(S_3,S_4)$ &  $(3,3)$ & 
           $2^{2} \cdot 3$ &  $1$ & \\ \hline 
$[56,5]$ & $ 6 $ & $ 6 $ & yes & $(S_3,D_4)$ &  $(1,2)$ & 
           $2^{2}$ &  $1$ & \\ \hline      
$[63,1]$ & $ 4 $ & $ 8 $ & no & $(S_3,D_4)$ &  $(1,2)$ & 
           $2^{19}$ &  $2^{17}$ &  $\SD(F[18,1])$ \\ \hline 
$[63,2]$ & $ 8 $ & $ 6 $ & yes & $(S_3,S_4)$ &  $(3,3)$ & 
           $2^{2} \cdot 3$ &  $1$ & \\ \hline 
$[63,3]$ & $ 8 $ & $ 6 $ & yes & $(S_3,D_4)$ &  $(1,2)$ & 
           $2$ &  $1$ & \\ \hline 
$[63,4]$ & $ 8 $ & $ 6 $ & yes & $(S_3,S_4)$ &  $(3,4)$ & 
           $2^{2} \cdot 3$ &  $1$ & \\ \hline 
$[70,1]$ & $ 4 $ & $ 10 $ & no & $(S_3,D_4)$ &  $(1,2)$ & 
           $2^{20}$ &  $2^{18}$ &  $\SD(F[20,1])$ \\ \hline 
$[70,2]$ & $ 4 $ & $ 10 $ & no & $(S_3,D_4)$ &  $(1,2)$ & 
           $2^{21}$ &  $2^{19}$ &  $\SD(F[20,2])$ \\ \hline 
$[70,3]$ & $ 8 $ & $ 6 $ & yes & $(S_3,S_4)$ &  $(3,3)$ & 
           $2^{2} \cdot 3$ &  $1$ & \\ \hline 
$[84,1]$ & $ 4 $ & $ 8 $ & no & $(S_3,D_4)$ &  $(1,2)$ & 
           $2^{24}$ &  $2^{22}$ &  $\SD(F[24,1])$ \\ \hline 
$[84,2]$ & $ 8 $ & $ 6 $ & yes & $(S_3,D_4)$ &  $(1,2)$ & 
           $2$ &  $1$ & \\ \hline 
$[84,3]$ & $ 8 $ & $ 6 $ & yes & $(S_3,\ZZ_2^2)$ &  $(1,2)$ & 
           $2$ &  $1$ & \\ \hline 
$[84,4]$ & $ 6 $ & $ 8 $ & yes & $(S_3,D_4)$ &  $(1,2)$ & 
           $2^{3}$ &  $2$ & \\ \hline 
$[91,1]$ & $ 4 $ & $ 10 $ & no & $(C_3,D_4)$ &  $(1,1)$ & 
           $2^{25}$ &  $2^{24}$ &  $\SD(F[26,1])$ \\ \hline 
$[91,2]$ & $ 8 $ & $ 6 $ & yes & $(C_3,C_4)$ &  $(1,1)$ & 
           $1$ &  $1$ & \\ \hline 
$[98,1]$ & $ 4 $ & $ 10 $ & no & $(S_3,D_4)$ &  $(1,2)$ & 
           $2^{29}$ &  $2^{27}$ &  $\SD(F[28,1])$ \\ \hline 
$[98,2]$ & $ 6 $ & $ 6 $ & yes & $(S_3,D_4)$ &  $(1,2)$ & 
           $2$ &  $1$ & \\ \hline 
$[98,3]$ & $ 6 $ & $ 8 $ & yes & $(S_3,D_4)$ &  $(1,2)$ & 
           $2$ &  $1$ & \\ \hline 
$[98,4]$ & $ 8 $ & $ 6 $ & yes & $(S_3,D_4)$ &  $(1,2)$ & 
           $2$ &  $1$ & \\ \hline 
$[98,5]$ & $ 6 $ & $ 6 $ & yes & $(S_3,D_4)$ &  $(1,2)$ & 
           $2^{9}$ &  $2^{7}$ & \\ \hline 
$[98,6]$ & $ 6 $ & $ 6 $ & yes & $(S_3,D_4)$ &  $(1,2)$ & 
           $2^{9}$ &  $2^{7}$ & \\ \hline 
$[98,7]$ & $ 6 $ & $ 6 $ & yes & $(S_3,D_4)$ &  $(1,2)$ & 
           $2$ &  $1$ & \\ \hline 
$[98,8]$ & $ 8 $ & $ 6 $ & yes & $(S_3,D_4)$ &  $(1,2)$ & 
           $2^{2}$ &  $1$ & \\ \hline 
$[98,9]$ & $ 6 $ & $ 6 $ & yes & $(S_3,\ZZ_2^2)$ &  $(1,2)$ & 
           $2$ &  $1$ & \\ \hline
\end{tabular}
\end{center}
\end{small}

\newpage

\thebibliography{15}

\bibitem{betten}
A.\ Betten, G.\ Brinkmann, T.\ Pisanski, Counting symmetric configurations $v_3$,
 {\em Discrete Appl. Math.} {\bf 99} (2000), 331--338.

\bibitem{v3} 
M.\ Boben, Irreducible $(v_3)$ configurations and graphs, {\em Discrete Math.} {\bf 307} (2007),  331--344.

\bibitem{BGPZ}
M.\ Boben, B.\ Gr\"{u}nbaum, T.\ Pisanski, A.\ \v{Z}itnik,
Small triangle-free configurations of points and lines,
{\em Discrete Comput. Geom.} \textbf{35} (2006), 405--427.

\bibitem{BP}
M.\ Boben, T.\ Pisanski,
Polycyclic configurations,
{\em European J.\ Combin.} {\bf 24} (2003), 431--457.

 \bibitem{vanBon}
 J.\ van Bon, On locally $s$-arc-transitive graphs with trivial edge kernel,
{\em Bull.\ London Math.\ Soc.} {\bf 43} (2011), 799--804.

\bibitem{magma}
W.\ Bosma and J.\ Cannon, {\em Handbook of Magma Functions}, University of Sidney (1994).



\bibitem{ConDob}
M.\ D.\ E.\ Conder and P.\ Dobcs\' anyi,
Trivalent symmetric graphs on up to $768$ vertices,
{\em J.\ Combin.\ Math.\ Combin.\ Comput.} {\bf 40} (2002), 41--63.

\bibitem{CL}
M.\ D.\ E.\ Conder and P.\ Lorimer,
Automorphism groups of symmetric graphs of valency $3$, 
{\em J.\ Combin.\ Theory Ser.\ B} {\bf 47} (1989), 60Ð72.

\bibitem{CMMP}
M.\ D.\ E.\ Conder, A.\ Malni\v{c}, D.\ Maru\v{s}i\v{c}, P.\ Poto\v{c}nik, A census of semisymmetric cubic graphs on up
to $768$ vertices, {\em J.\ Algebr.\ Comb.} {\bf 23} (2006),  255--294.


\bibitem{M}
J. De Saedeleer, D.\ Leemans, M. Mixer, T.\ Pisanski,
Core-Free, Rank Two Coset Geometries from Edge-Transitive Bipartite Graphs,
{\em preprint}, arXiv:1106.5704v2.

\bibitem{SL}
J.\ De Saedeleer , D.\ Leemans, On the rank two geometries of the groups
PSL(2; q): part I, {\em Ars Mathematica Contemporanea} {\bf 3} (2010), 177--192.



\bibitem{dj}
D.\ \v{Z}. Djokovi\'c,
A class of finite group-amalgams, {\em Proc.\ Amer.\ Math.\ Soc.} {\bf 80} (1980), 22Ð26.

\bibitem{DjM}
D.\ \v{Z}. Djokovi\'c and G.\ L.\ Miller,
Regular groups of automorphisms of cubic graphs,
{\em J.\ Combin.\ Theory Ser.\ B} {\bf 29} (1980), 195--230.



\bibitem{gas} W.\ Gasch\"utz, Zur Erweiterungstheorie der endlichen Gruppen,
{\em J.\ Reine Angew.\ Math.} {\bf 190} (1952), 93--107.





\bibitem{GLP} M.\  Giudici, C.-H.\ Li, C.\ E.\ Praeger, Analysing finite locally $s$-arc transitive graphs,  {\em Trans.\ Amer.\ Math.\ Soc.} {\bf 356} 
 (2004),  291--317.

\bibitem{Gold}
D.\ M.\ Goldschmidt,
Automorphisms of trivalent graphs,
{\em Ann.\ of Math.} {\bf 111} (1980), 377--406.

\bibitem{gropp}
H.\ Gropp, The construction of all configurations $(12_4,16_3)$, {\em Ann.\ Discrete Math.} {\bf 51} (1992) 85--91.

\bibitem{baum} B.\ Gr\"unbaum, {\em Configurations of Points and Lines}, 
Graduate Studies in Mathematics {\bf 103}, Amer.\ Math.\ Soc., Providence, Rhode Island, (2009).

\bibitem{amalgams} A.\ A.\ Ivanov and S.\ V.\ Shpectorov, {\em Applications of group amalgams to algebraic graph theory},
in ``Investigations in Algebraic Theory of Combinatorial Objects'' (ed.\ I.\ A.\ Farad\v{z}ev, A.\ A.\ Ivanov, M.\ H.\ Klin,
and A.\ J.\ Woldar), Math.\ Appl.\ (Soviet Ser.) {\bf 84}, Kluwer Acad.\ Publ., Dordrecht (1994), 417--441.





\bibitem{TomoDragan} D.\ Maru\v{s}i\v{c}, T.\ Pisanski, Weakly flag-transitive configurations
and half-arc-transitive graphs, {\em Europ.\ J.\ Combin.} {\bf 20} (1999), 559--570.

\bibitem{pot} P.\ Poto\v cnik,  A list of $4$-valent $2$-arc-transitive graphs and finite faithful amalgams of index $(4,2)$, 
{\em European J.\ Combin.} {\bf 30} (2009), 1323Ñ1336.

\bibitem{PW} P.\ Poto\v cnik, S.\ Wilson, Tetravalent edge-transitive graphs of girth at most $4$, 
{\em  J.\ Combin.\ Theory Ser.\ B} {\bf 97} (2007), 217--236.

\bibitem{web} P.\ Poto\v cnik,  {\em Primo\v{z} Poto\v{c}nik's home page}, 
{\tt http://fmf.uni-lj.si/$^\sim$potocnik/work.htm}



\bibitem{rob} D.\ J.\ S.\ Robinson, {\em A Course in the Theory of Groups}, 2nd ed., Springer-Verlag, New York (1996).

\bibitem{SerSer} B.\ Servatius and H.\ Servatius, The generalized Reye configuration,
 {\em Ars Mathematica Contemporanea} {\bf 3} (2010), 21--27.

%

\bibitem{tits}
J.\ Tits, Espaces homog\`enes et groupes de Lie exceptionnels, in:\ {\em Proc.\ Int.\ Congr.\ Math. Amsterdam, vol.\ I}, (1954),
495--496.

 \bibitem{tutte}
  W.\ T.\ Tutte, A family of cubical graphs, {\em Proc.\ Cambridge Philos.\ Soc.}
  {\bf 43} (1947), 459--474.

\bibitem{weiss}
R.\ Weiss,
Presentation for $(G,s)$-transitive graphs of small valency,
{\em Math.\ Proc.\ Phil.\ Soc.} {\bf 101} (1987), 7--20.


\end{document}